\documentclass[11pt]{amsart}
\usepackage[top=30truemm,bottom=30truemm,left=30truemm,right=30truemm]{geometry} 
\usepackage{amsmath, amssymb, array}
\geometry{a4paper}
\usepackage[all]{xy}
\usepackage{ascmac}
\usepackage{color}
\usepackage{amscd}
\usepackage{fancyhdr}
\usepackage[alphabetic]{amsrefs}

\title{A unique pair of triangles} 
\thanks{Published online(https://doi.org/10.1016/j.jnt.2018.07.007)}
\thanks{0022-314X/$\copyright$ 2018 The Authors. Published by Elsevier Inc. This is an open access article under the CC BY-NC-ND license (http://creativecommons.org/licenses/by-nc-nd/4.0/).}

\author{Yoshinosuke Hirakawa \and Hideki Matsumura}
\address[Hideki Matsumura]{Department of Science and Technology, Keio University, 14-1, Hiyoshi 3-chome, Kouhoku-ku, Yokohama-shi, Kanagawa-ken, Japan}
\email{hirakawa@keio.jp}
\email{hidekimatsumura@keio.jp}

\thanks{This research was supported by JSPS KAKENHI Grant Number JP15J05818 and the Research Grant of Keio Leading-edge Laboratory of Science $\&$ Technology (Grant Numbers 000036 and 000053). This research was also conducted as part of the KiPAS program FY2014-2018 of the Faculty of Science and Technology at Keio University, and was supported in part by JSPS KAKENHI 26247004, 18H05233, as well as the JSPS Core-to-Core program ``Foundation of a Global Research Cooperative Center in Mathematics focused on Number Theory and Geometry".}
\subjclass[2010]{primary 14G05; secondary 11G30, 11Y50}
\keywords{Diophantine geometry, hyperelliptic curves, rational triangles}
\date{\today}
 
\pagestyle{fancy}
 \lhead[]{\thepage}
 \chead[]{}
 \rhead[\thepage]{}
 \lfoot[]{}
 \cfoot[]{}
 \rfoot[]{}
 
\makeatletter
  \def\section{\@startsection{section}{1}{\z@}%
     {4ex plus 0ex}%
     {4ex plus 0ex}%
     {\normalfont\large\bfseries}}
  \makeatother

\makeatletter
  \def\subsection{\@startsection{subsection}{1}{\z@}%
     {2ex plus 0ex}%
     {2ex plus 0ex}%
     {\normalfont\normalsize\bfseries}}
  \makeatother

\theoremstyle{plain}
 \newtheorem{theorem}{Theorem}[section]

\theoremstyle{definition}

\begin{document}


\maketitle

\begin{abstract}
A rational triangle is a triangle with sides of rational lengths. In this short note, we prove that there exists a unique pair of a rational right triangle and a rational isosceles triangle which have the same perimeter and the same area. In the proof, we determine the set of rational points on a certain hyperelliptic curve by a standard but sophisticated argument which is based on the 2-descent on its Jacobian variety and Coleman's theory of $p$-adic abelian integrals.
\end{abstract}

\section{Main theorem and its proof}

A rational (resp. integral) triangle is a triangle with sides of rational (resp. integral) lengths. Such a triangle has arithmetic interest: For instance, every rational right triangle has sides of lengths $(k(1+x^{2}), k(1-x^{2}), 2kx)$ with positive rational numbers $k, x > 0$. We may check this fact from the Pythagorean theorem and the uniqueness of the prime decomposition, which are most elementary theorems in geometry and arithmetic respectively.

From the point of view of arithmetic, perimeter and area are fundamental invariants of a rational triangle. Therefore, it is natural to try to classify rational triangles by their perimeters and/or areas. Indeed, there are several works on construction of infinitely many pairs of rational triangles which have the same perimeters and the same areas (see e.g. \cite{Bremner}, \cite{van Luijk} and references there).

A primitive triangle is an integral triangle such that the greatest common divisor of the lengths of its sides is 1. We can prove that there exists no pair of a primitive right triangle and a primitive isosceles triangle which have the same perimeter and the same area. We give an elementary proof of this fact in Appendix. How many such pairs are there in the category of rational triangles? In this short note, we give the complete answer to this question, that is, there exists only one such pair of triangles.

\begin{theorem} \label{main theorem}
Up to similitude, there exists a unique pair of a rational right triangle and a rational isosceles triangle which have the same perimeter and the same area. The unique pair consists of the right triangle with sides of lengths $(377, 135, 352)$ and the isosceles triangle with sides of lengths $(366, 366, 132)$.
\end{theorem}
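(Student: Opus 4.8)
\emph{From triangles to a curve.} The plan is to reduce the statement to the determination of all rational points on a genus $2$ curve, and then to settle that by $2$-descent followed by Chabauty--Coleman. By the parametrization recalled above, a rational right triangle is, up to similitude, $(1+x^{2},\,1-x^{2},\,2x)$ with $x\in\mathbb{Q}$, $0<x<1$. A rational isosceles triangle is two congruent rational right triangles glued along a leg, so, after absorbing the common scalar, it is $(1+y^{2},\,1+y^{2},\,4y)$ with $y\in\mathbb{Q}$, $0<y<1$; one checks that this single family already exhausts all similitude classes of rational isosceles triangle with rational area (for the normalized base $t$ the condition reads $4-t^{2}=\square$, i.e.\ a rational point on the unit circle), and that rationality of the area is automatic here since the right triangle always has rational area. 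Because rescaling a triangle by $\lambda$ multiplies its perimeter by $\lambda$ and its area by $\lambda^{2}$, two triangles can be rescaled to share perimeter and area exactly when the similitude invariant $\operatorname{area}/\operatorname{perimeter}^{2}$ coincides. Equating this invariant for the two families and clearing denominators gives
\[
  x(1-x)(1+y)^{3}=2y(1-y)(1+x).
\]
Read as a quadratic in $x$, this is solvable over $\mathbb{Q}$ iff its discriminant
\[
  \Delta(y)=y^{6}+18y^{5}+43y^{4}+12y^{3}-5y^{2}-6y+1
\]
is a rational square. Thus the desired pairs of triangles correspond to rational points on the genus $2$ hyperelliptic curve $C:\ z^{2}=\Delta(y)$ subject to the open conditions $0<y<1$ and $0<x<1$; the point $(y,z)=(1/11,\,868/1331)$ --- which yields $x=5/27$ --- is the one attached to the pair in the statement.

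\emph{Computing $C(\mathbb{Q})$.} I would first run a $2$-descent on $J=\operatorname{Jac}(C)$, via the standard homomorphism into $\bigl(\mathbb{Q}[T]/(\Delta(T))\bigr)^{\times}$ modulo squares and $\mathbb{Q}^{\times}$, governed by the unit and class groups of the number fields cut out by the factors of $\Delta$, with the goal of proving $\operatorname{rank}J(\mathbb{Q})\le 1<g=2$. Granting the rank bound, Chabauty's condition is met: for a well-chosen prime $p$ of good reduction one produces a nonzero regular differential on $C$ whose $p$-adic Coleman integral annihilates $J(\mathbb{Q})$, and the zeros of that integral in each residue disk then bound the rational points in it. Matching these local bounds against the rational points one already sees --- the two points at infinity, $(0,\pm1)$, $(1,\pm8)$, $(-1,\pm4)$, and $(1/11,\pm 868/1331)$ --- and, if necessary, invoking a Mordell--Weil sieve, should pin down $C(\mathbb{Q})$ completely.

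\emph{Back to triangles.} The points at infinity and those with $y\in\{0,\pm1\}$ correspond to degenerate configurations and are discarded; for each of the remaining finitely many rational points one solves the quadratic for $x$ and tests $0<x<1$, i.e.\ that after a common rescaling one really obtains two genuine similar triangles. Only $y=1/11$ survives, forcing $x=5/27$ and hence precisely the pair $(377,135,352)$, $(366,366,132)$ up to similitude.

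The crux is the middle step. Should the $2$-descent only give $\operatorname{rank}J(\mathbb{Q})\le 2$ instead of $\le 1$, plain Chabauty--Coleman is unavailable and one must instead descend to a quotient elliptic curve (elliptic Chabauty) or mount a quadratic Chabauty argument; and even with rank $\le 1$ in hand, carrying out the $p$-adic integration and, where needed, the Mordell--Weil sieve so as to rule out \emph{every} spurious residue disk is the delicate, computation-heavy heart of the proof. The triangle bookkeeping at the two ends is routine by comparison.
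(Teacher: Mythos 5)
Your proposal follows the same route as the paper in substance: reduce the problem to determining the rational points on a genus-$2$ hyperelliptic curve, bound the Mordell--Weil rank of its Jacobian by $2$-descent, and finish with Chabauty--Coleman. Your front end is actually cleaner than the paper's: the paper keeps a scale factor $k$ on the right triangle and splits into two cases according to which leg of the generating right triangle the isosceles triangle is folded along, producing two (birationally equivalent) curves $C_1$ and $C_2$, whereas your use of the similitude invariant $\operatorname{area}/\operatorname{perimeter}^{2}$ together with the observation that the single family $(1+y^{2},1+y^{2},4y)$, $0<y<1$, already exhausts all rational isosceles triangles with rational area collapses everything to one equation. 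Your sextic $\Delta(y)=y^{6}+18y^{5}+43y^{4}+12y^{3}-5y^{2}-6y+1$ is correct, as is the list of ten points: I checked $\Delta(1/11)=(868/1331)^{2}$ and that the resulting roots $x\in\{5/27,\,11/16\}$ both give the triangle $(377,135,352)$, so the bookkeeping at both ends is sound.

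The one genuine gap is the middle step, which you describe rather than carry out --- but it is much less delicate than your closing paragraph fears. The $2$-descent does give $\operatorname{rank}J(\mathbb{Q})\le 1$ (the paper simply invokes Magma's \texttt{RankBound}, implementing Stoll's algorithm), and then the bare Chabauty--Coleman inequality $\#C(\mathbb{Q})\le\#C(\mathbb{F}_{p})+(2g-2)$ at $p=5$ (a prime of good reduction, with $\#C(\mathbb{F}_{5})=8$) yields $\#C(\mathbb{Q})\le 10$, which already equals the number of points you have in hand. So no explicit Coleman integration, no residue-disk analysis, no Mordell--Weil sieve, and none of your contingency plans (elliptic or quadratic Chabauty) are needed. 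To convert the proposal into a proof you need only execute the descent and the finite-field point count.
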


Since the proof for the primitive case is quite elementary, it may be surprising that our proof of Theorem \ref{main theorem} given below depends on a sophisticated theory of modern arithmetic geometry of hyperelliptic curves (of genus 2), which contains the 2-descent argument on the Jacobian variety of a hyperelliptic curve (\cite{FPS}, \cite{Stoll}) and Coleman's theory of $p$-adic abelian integrals (\cite{Coleman Annals}, \cite{Coleman Duke}). In particular, the following theorem of Chabauty and Coleman is a key ingredient of our proof:

\begin{theorem} [{\cite{Chabauty}, \cite[Corollary 4a]{Coleman Duke}, \cite[Theorem 5.3 (b)]{MP}}] \label{Chabauty-Coleman}
Let $C$ be a curve
\footnote{Here, a curve over $\mathbb{Q}$ means a smooth projective geometrically integral scheme of dimension 1 over $\mathrm{Spec}(\mathbb{Q})$.}
of genus $g \geq 2$ over $\mathbb{Q}$, and $J$ be its Jacobian variety. Let $p$ be a prime number. Suppose that the rank of $J(\mathbb{Q})$ is smaller than $g$, and $p > 2g$ and $C$ has good reduction at $p$. Then, we have
\begin{eqnarray*}
\#C(\mathbb{Q}) \leq \#C(\mathbb{F}_{p})+(2g-2).
\end{eqnarray*}
\end{theorem}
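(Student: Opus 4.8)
The plan is to translate the geometric problem into the determination of the rational points on one explicit curve and then to invoke Theorem \ref{Chabauty-Coleman}. First I would fix scale-invariant coordinates for the two shapes. Every rational right triangle is similar to one with sides $(1-x^2,\,2x,\,1+x^2)$ for a rational $x \in (0,1)$, with perimeter $2k(1+x)$ and area $k^2 x(1-x^2)$ after rescaling by $k$. Since a rational right triangle has rational area, any isosceles triangle sharing its area also has rational area, which forces the foot of the apex altitude to split it into two congruent rational right triangles; parametrizing that half-triangle as $(m(1-y^2),\,2my,\,m(1+y^2))$ yields an isosceles triangle with base $2m(1-y^2)$, equal sides $m(1+y^2)$, perimeter $4m$, and area $2m^2 y(1-y^2)$ for rational $y \in (0,1)$. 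As $y$ runs over $(0,1)$ this covers every isosceles shape, so no auxiliary case distinction is needed.

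Next I would impose the two equalities. The perimeter condition $P_1 = P_2$ fixes the relative scale $m$ in terms of $k,x,y$, and substituting into the area condition $A_1 = A_2$ leaves a single scale-invariant polynomial relation between $x$ and $y$, namely $2x(1-x) = (1+x)\,y(1-y^2)$. This is quadratic in $x$, so a solution $x$ is rational exactly when the discriminant of this quadratic is a rational square; computing that discriminant gives the hyperelliptic model $C\colon v^2 = y^6 - 2y^4 + 12y^3 + y^2 - 12y + 4$. After checking that the sextic is separable (hence $C$ has genus $g=2$), the rational points $(y,v)$ of $C$ — with the sign of $v$ recording which root $x$ of the quadratic is taken — parametrize exactly the candidate pairs.

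I would then bring in the arithmetic machinery. A $2$-descent on $\mathrm{Jac}(C)$ should show $\operatorname{rank} J(\mathbb{Q}) \le 1 < g = 2$; this is the step I expect to be the main obstacle, as it requires pinning down the relevant Selmer group precisely — exhibiting enough independent rational divisor classes to account for the descent bound from below and ruling out the remaining classes from above. Granting the rank bound, I would select a prime $p$ of good reduction with $p > 2g = 4$, count $\#C(\mathbb{F}_p)$ by hand, and apply Theorem \ref{Chabauty-Coleman} to obtain $\#C(\mathbb{Q}) \le \#C(\mathbb{F}_p) + 2$.

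Finally I would match this bound against the visibly present rational points: the two points at infinity (rational since the leading coefficient $1$ is a square), $(0,\pm 2)$, $(\pm 1,\pm 2)$, and the pair $(5/6,\,\pm 217/216)$ arising from the claimed solution, and I would choose $p$ so that $\#C(\mathbb{F}_p)+2$ equals this count, forcing the list to be exhaustive. Translating back, the points with $y \in \{0,\pm 1\}$ and the points at infinity all give degenerate configurations (vanishing area or vanishing base, or violation of the triangle inequality), whereas $y = 5/6$ yields $x = 11/16$; here the two signs of $v$ produce the two roots $x = 11/16$ and $x = 5/27 = (1-x)/(1+x)|_{x=11/16}$, which are merely the two leg-labelings of a single right triangle. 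Clearing denominators then recovers the right triangle $(377,135,352)$ and the isosceles triangle $(366,366,132)$ as the unique similitude class, which proves the theorem.
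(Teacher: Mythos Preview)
Your sketch proves Theorem~\ref{main theorem}, not the stated Theorem~\ref{Chabauty-Coleman}. The paper does not prove the Chabauty--Coleman bound at all: it is quoted from \cite{Chabauty}, \cite{Coleman Duke}, \cite{MP} and invoked as a black box. A proof of that bound would require Coleman's $p$-adic integration on the Jacobian and a Newton-polygon count of the zeros of an annihilating differential on each residue disc, none of which your write-up touches; you explicitly \emph{apply} Theorem~\ref{Chabauty-Coleman} rather than establish it.

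If your intended target was actually Theorem~\ref{main theorem}, your argument is correct and in one respect cleaner than the paper's. The paper splits into two cases according to which leg of the half-right-triangle serves as the half-base of the isosceles triangle, obtaining two hyperelliptic curves $C_1$, $C_2$ (noted in a footnote to be isomorphic via $(u,s)=(1-2/w,\,2r/w^3)$), and runs Theorem~\ref{Chabauty-Coleman} on each at $p=5$ with $\#C_i(\mathbb{F}_5)=8$. Your single parametrization with base $2m(1-y^2)$ already sweeps out every isosceles shape as $y$ ranges over $(0,1)$, so the case split is unnecessary; your sextic $y^6-2y^4+12y^3+y^2-12y+4$ equals the paper's $(u^3-u+6)^2-32$ after expansion, and your ten listed rational points match the paper's list for $C_2$. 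Your observation that the relation $2x(1-x)=(1+x)\,y(1-y^2)$ is invariant under $x\mapsto(1-x)/(1+x)$, so that the two signs of $v$ at $y=5/6$ yield congruent right triangles, is a point the paper handles only by exhibiting both triples $(k,x,u)=(27/16,5/27,5/6)$ and $(32/27,11/16,5/6)$ and asserting they give the same pair. The one datum you leave open is the choice of prime: the paper takes $p=5$, where $\#C(\mathbb{F}_5)=8$, making the bound $8+2=10$ tight.
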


Although there are several works on rational triangles, the only previous work which uses such a theory is \cite{ZP}, where the authors  prove that there exists no pair of an integral isosceles triangle and a certain integral rhombus which have the same perimeters and the same integral areas.

\begin{proof} [Proof of Theorem \ref{main theorem}]
Assume that there exists such a pair of triangles. First, note that since every rational isosceles triangle with a rational area has a rational height, it is a union of the two copies of a rational right triangle.  Moreover, by rescaling both of the given triangles, we may assume that the given triangles have sides of lengths
\begin{enumerate}
\item $(k(1+x^{2}), k(1-x^{2}), 2kx)$ and $(1+u^{2}, 1+u^{2}, 4u)$, or
\item $(k(1+x^{2}), k(1-x^{2}), 2kx)$ and $(1+u^{2}, 1+u^{2}, 2(1-u^2))$
\end{enumerate}
respectively with positive rational numbers $x, u, k$.
\footnote{Cf. the proof of Theorem \ref{appendix}.}

In case (1), we have a simultaneous equation
\begin{eqnarray*}
\begin{cases}
k+kx = 1+2u+u^{2} \\
k^{2}x(1-x^{2}) = 2u(1-u^{2}).
\end{cases}
\end{eqnarray*}
Since $x, u, k > 0$, this is equivalent to
\begin{eqnarray*}
\begin{cases}
k(1+x) = w^{2} \\
(w^{2}-k)w(2k-w^{2})= 2k(w-1)(w-2),
\end{cases}
\end{eqnarray*}
where we set $w = u+1$. Since the former equation has a unique solution $(x, w, k)$ for every solution $(w, k)$ with $w > 1, k > 0$ of the latter, this simultaneous equation is equivalent to the single latter equation under the condition $w > 1, k > 0$. Moreover, it is equivalent to
\begin{eqnarray*}
-w^{5}+3kw^{3}-2k^{2}w = 2kw^{2}-6kw+4k,
\end{eqnarray*}
i.e.,
\begin{eqnarray*}
2wk^{2}+(-3w^{3}+2w^{2}-6w+4)k+w^{5} = 0.
\end{eqnarray*}
Since $k$ is a rational number, the discriminant of the left hand side as a polynomial of $k$ is a square integer, say, $r^{2}$. Therefore, we obtain
\begin{eqnarray*}
r^{2} = (-3w^{3}+2w^{2}-6w+4)^{2}-8w^{6},
\end{eqnarray*}
which defines an affine curve. We denote its non-singular compactification by $C_{1}$, which is a hyperelliptic curve.

It is easy to check that $C_{1}$ has at least ten rational points, that is, $(w, r) = (0, \pm4), (1, \pm1)$, $(2, \pm8), (12, \pm868)$ and two points at infinity. All of these points do not give triangles.
\footnote{Indeed, the equalities $(w, r) = (0, \pm4), (1, \pm1)$ imply that the bottom length $4u$ of the isosceles triangle is less than or equal to zero, and $(w, r) = (2, \pm8), (12, \pm868)$ imply that the height $1-u^{2}$ of the isosceles triangle is less than or equal to zero.}

On the other hand, we can prove that the Mordell-Weil rank of the Jacobian variety of $C_{1}$ over $\mathbb{Q}$ is at most 1.
\footnote{In fact, it has exactly rank 1.}
Indeed, we can check it by Magma Calculator
\footnote{http://magma.maths.usyd.edu.au/calc}
(cf. \cite{Bosma-Cannon-Playoust}). Here are the inputs and output, where the algorithm is based on \cite{Stoll}.
\footnote{See also the Magma handbook: Example CrvHyp\_{}sha\_{}visibility (H131E34) in https://magma.maths.usyd.edu.au/magma/handbook/text/1501.}

$>$ R$<$w$>$:=PolynomialRing(Rationals());

$>$ C:=HyperellipticCurve((-3*w\^{}3+2*w\^{}2-6*w+4)\^{}2-8*w\^{}6);

$>$ J:=Jacobian(C);

$>$ RankBound(J);

$1$

Finally, we may easily check that $C_{1}$ has good reduction at 5
and $\#C_{1}(\mathbb{F}_{5}) = 8$.
Therefore, by applying Theorem \ref{Chabauty-Coleman}, we obtain that $\#C_{1}(\mathbb{Q}) \leq 10$, i.e., there exists no other rational point than the above ten rational points.

Next, we consider case (2). We have a simultaneous equation
\begin{eqnarray*}
\begin{cases}
k+kx = 2 \\
k^2x(1-x^{2}) = 2u(1-u^{2}).
\end{cases}
\end{eqnarray*}
Since $x, u, k > 0$, this is equivalent to
\begin{eqnarray*}
\begin{cases}
k(1+x) = 2 \\
(2-k)(2k-2) = ku(1-u^{2}).
\end{cases}
\end{eqnarray*}
Since the former equation has a unique solution $(x, u, k)$ for every solution $(u, k)$ with $0<k<2$ of the latter, this simultaneous equation is equivalent to the single latter equation under the condition $0<k<2$. Moreover, it is equivalent to
\begin{eqnarray*}
2k^{2}-(u^{3}-u+6)k+4 = 0.
\end{eqnarray*}
Since $k$ is a rational number, the discriminant of the left hand side as a polynomial of $k$ is a square integer, say, $s^{2}$. Therefore, we obtain
\begin{eqnarray*}
s^{2} = (u^{3}-u+6)^{2}-32,
\end{eqnarray*}
which defines an affine curve. We denote its non-singular compactification by $C_{2}$, which is a hyperelliptic curve.
\footnote{In fact, the two curves $C_{1}$ and $C_{2}$ are isomorphic to each other via the isomorphism induced by a birational map given by $(u, s) = (1-2/w, 2r/w^{3})$.}
It is easy to check that $C_{2}$ has at least ten rational points, that is, $(u, s) = (0, \pm2), (1, \pm2)$, $(-1, \pm2), (5/6, \pm217/216)$ and two points at infinity. The former six points do not give triangles.
\footnote{Indeed, the equalities $(u, s) = (0, \pm2)$ imply that the height $2u$ of the isosceles triangle is zero, and $(u, s) = (1, \pm2), (-1, \pm2)$ imply that the bottom length $2(1-u^{2})$ of the isosceles triangle is zero.}
Moreover, the equalities $(u, s) = (5/6, 217/216)$ and $(5/6, -217/216)$ imply that $(k, x, u) = (27/16, 5/27, 5/6)$ and $(32/27, 11/16, 5/6)$ respectively, both of which give the unique pair in the statement up to similitude. Finally, by the same argument as in case (1), we may also check that $\#C_{2}(\mathbb{Q}) \leq 10$, i.e., there exists no other rational point than the above ten rational points. This completes the proof.
\end{proof}

\section*{Acknowledgements}

The authors thank their advisor Ken-ichi Bannai for reading the draft and giving many helpful comments. The authors also thank him for warm and constant encouragement. The authors also thank the referee for her or his valuable comments which made this article clearer.

\section*{Appendix : No pair of a primitive right triangle and an isosceles triangle}

\begin{theorem} \label{appendix}
There exists no pair of a primitive right triangle and a primitive isosceles triangle which have the same perimeter and the same area.
\end{theorem}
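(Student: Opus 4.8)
The plan is to parametrise both families of triangles by integers and reduce the problem to a divisibility/size analysis. A primitive right triangle has sides $(p^2+q^2, p^2-q^2, 2pq)$ with $p > q > 0$ coprime of opposite parity; its perimeter is $2p(p+q)$ and its area is $pq(p+q)(p-q)$. A primitive isosceles triangle splits as two copies of a (not necessarily primitive) right triangle with legs $h$ and $b/2$ and hypotenuse $c$ (the equal side), so it has sides $(c, c, b)$ with $c^2 = h^2 + (b/2)^2$; writing this out, $(c,c,b)$ is a rational multiple of $(m^2+n^2, m^2+n^2, 2(m^2-n^2))$ or $(m^2+n^2, m^2+n^2, 4mn)$ for coprime $m > n > 0$, and one clears denominators and imposes primitivity to pin down the scaling factor (this is exactly the case split $(1)$ vs. $(2)$ used in the proof of Theorem \ref{main theorem}). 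Its perimeter is then a controlled multiple of $m(m+n)$ and its area a controlled multiple of $mn(m-n)(m+n)$ or the analogue.

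Next I would equate the two perimeters and the two areas. Dividing the area equation by the perimeter equation kills the common factor $(p+q)$ versus $m(m+n)$ and yields a relation of the shape (inradius of the right triangle) $=$ (inradius of the isosceles triangle), i.e. $q(p-q) = $ (an explicit quadratic expression in $m,n$). Combined with the perimeter equation $2p(p+q) = $ (explicit in $m,n$), this is a system of two polynomial equations in $p,q,m,n$. I would then use the primitivity conditions (coprimality, parity constraints on $p,q$ and on $m,n$) together with a $2$-adic (parity) analysis: the perimeter of a primitive right triangle is $\equiv 0 \bmod 4$ only under specific congruences, while the perimeter of a primitive isosceles triangle of each type lies in a different residue class, and similarly for the areas. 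Pushing these congruences should already eliminate one of the two cases outright.

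In the surviving case I expect the equations to force a small bound: after substitution one gets something like $p \mid$ (a fixed small integer) or a Pell-type/factorisation constraint $d_1 d_2 = $ (small), leaving only finitely many $(p,q,m,n)$ to check by hand, all of which fail to give a genuine triangle (triangle inequality violated, or degenerate, or non-primitive). The main obstacle will be organising the case analysis cleanly: there are two shapes for the isosceles triangle, each with its own primitivity normalisation, and within each the gcd conditions on $(p,q)$ and $(m,n)$ interact with the parity of the perimeter in a slightly delicate way, so the bookkeeping — rather than any single hard step — is where the care is needed. Everything here is elementary: no elliptic or hyperelliptic machinery is required, in contrast to Theorem \ref{main theorem}.
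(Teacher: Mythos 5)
Your setup coincides with the paper's: parametrise the primitive right triangle as $(p^{2}+q^{2},\,p^{2}-q^{2},\,2pq)$, split the isosceles triangle into the two shapes with base $4mn$ or $2(m^{2}-n^{2})$, and divide the area equation by the perimeter equation so as to compare inradii. The gap is that the step which actually finishes the proof is only conjectured: you say congruences ``should'' eliminate one case outright and that you ``expect'' a small bound leaving finitely many $(p,q,m,n)$ to check in the other. Neither happens, and one of your intermediate assertions is off in a way that hides the real mechanism: the inradius of the isosceles triangle $(m^{2}+n^{2},m^{2}+n^{2},4mn)$ is not ``an explicit quadratic expression in $m,n$'' but the \emph{rational} number $2mn(m-n)/(m+n)$ (semiperimeter $(m+n)^{2}$, area $2mn(m^{2}-n^{2})$), and the whole point is that its denominator cannot be cleared. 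Equating it with the integer $q(p-q)$ forces $(m+n)\mid 2mn(m-n)$; but exactly one of $m,n$ is even, so $m+n$ is odd, and $\gcd(m,n)=1$ makes $m+n$ coprime to $m$, to $n$, and to $m-n$. Hence $m+n$ divides $1$, which is impossible for $m>n>0$. The second shape dies identically: the semiperimeter is $2m^{2}$, the inradius is $n(m^{2}-n^{2})/m$, and $\gcd\bigl(m,\,n(m^{2}-n^{2})\bigr)=\gcd(m,n^{3})=1$ forces $m=1$, again impossible. No residue-class comparison of perimeters and no finite enumeration ever enters.

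So the strategy you chose is the right one and is essentially the paper's, but as written the proposal stops exactly where the proof begins; the coprimality-of-the-denominator observation above is the missing idea. A minor further caution: your normalisation step (``one clears denominators and imposes primitivity to pin down the scaling factor'') deserves a sentence of justification in the integral setting, since the primitive isosceles triangle is glued from two rational, not necessarily integral, right triangles; but this is bookkeeping of the kind you already anticipated, not an obstruction.
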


\begin{proof}
We prove this statement by contradiction. Assume that there exists such a pair of triangles. As we have done in the proof of Theorem \ref{main theorem}, we may assume that the given triangles have sides of lengths
\begin{enumerate}
\item $(x^{2}+y^{2}, x^{2}-y^{2}, 2xy)$ and $(u^{2}+v^{2}, u^{2}-v^{2}, 4uv)$, or
\item $(x^{2}+y^{2}, x^{2}-y^{2}, 2xy)$ and $(u^{2}+v^{2}, u^{2}-v^{2}, 2(u^2-v^{2}))$
\end{enumerate}
respectively with positive integers $x, y, u, v$ such that $x$ and $y$ (resp. $u$ and $v$) are coprime, and exactly one of $x$ and $y$ (resp. $u$ and $v$) are even.

In the case (1), we have a simultaneous equation
\begin{eqnarray*}
\begin{cases}
2x^{2}+2xy = 2u^{2}+4uv+2v^{2} \\
xy(x^{2}-y^{2}) = 2uv(u^{2}-v^{2}).
\end{cases}
\end{eqnarray*}
Since $x, x+y, u+v > 0$, this is equivalent to
\begin{eqnarray*}
\begin{cases}
x(x+y) = (u+v)^{2} \\
y(x-y) = \frac{2uv(u-v)}{u+v}.
\end{cases}
\end{eqnarray*}
Here, note that since $y(x-y)$ is a positive integer, $2uv(u-v)/(u+v)$ is also a positive integer. On the other hand, since either $u$ or $v$ is even, we see that $u+v$ is an odd positive integer. Moreover, since $u$ and $v$ are coprime, $u+v$ and $uv$ are also coprime. Since $2uv(u-v)/(u+v)$ is a positive integer, this implies that $(u-v)/(u+v)$ is a positive integer, which is impossible whenever $u, v > 0$.

The proof for case (2) is the same. This completes the proof.
\end{proof}

\begin{bibdiv}
\begin{biblist}
\bibselect{Heron.tex}
\end{biblist}
\end{bibdiv}

\end{document}